\documentclass[12pt]{article}

\usepackage[utf8]{inputenc}
\usepackage[T1]{fontenc}
\usepackage{amsmath, amsthm, amssymb}
\usepackage{mathrsfs}
\usepackage{graphicx}
\usepackage{hyperref}
\usepackage{enumitem}
\usepackage{geometry}
\usepackage{authblk}

\newtheorem{theorem}{Theorem}[section]
\newtheorem{lemma}[theorem]{Lemma}
\newtheorem{proposition}[theorem]{Proposition}
\newtheorem{corollary}[theorem]{Corollary}

\theoremstyle{definition}
\newtheorem{definition}[theorem]{Definition}
\newtheorem{remark}[theorem]{Remark}
\newtheorem{example}[theorem]{Example}

\title{\textbf{On Locally Dually and Projectively Flat Almost Rational Finsler Metrics}}
\author[1]{Ghanashyam Kumar Prajapati\thanks{gspbhu@gmail.com}}
\author[2]{Ranadip Gangopadhyay\thanks{gangulyranadip@gmail.com}}
\author[3]{Bankteshwar Tiwari\thanks{banktesht@gmail.com}}

\affil[1]{Department of Applied Science \& Humanities, LNJPIT, Chapra, Bihar, 841302, India}
\affil[2]{Department of Applied Science \& Humanities, VIGNAN'S Foundation for Science, Technology and Research (Deemed to be University), Off Campus Hyderabad, Hyderabad, 501512, Telangana, India}
\affil[3]{DST-CIMS, Institute of Science, Banaras Hindu University, Varanasi, 221005, India}
\date{}

\begin{document}
	
	\maketitle
	
\begin{abstract}
In this paper, we investigate Almost Rational Finsler (AR-Finsler) metrics, a class of Finsler metrics whose fundamental tensor admits a decomposition $g_{ij}(x,y)=\eta(x,y)a_{ij}(x,y),$
where $\eta$ is a positive smooth function and $a_{ij}$ is rational in the fiber variables.  We derive necessary and sufficient conditions for local dual flatness and local projective flatness of AR-Finsler metrics. Furthermore, we obtain a compatibility relation characterizing AR-Finsler metrics that are simultaneously locally dually flat and locally projectively flat. As an application, we establish a rigidity result for AR-Finsler. Motivated by these results and several known rigidity phenomena in special classes of Finsler metrics, we formulate a conjecture concerning the local Minkowskianity of AR-Finsler metrics that are both locally dually flat and locally projectively flat. Several examples are presented to illustrate the theory.
\end{abstract}

\textbf{Keywords}: Finsler geometry; Locally dually flat metrics; Projectively flat metrics; Almost rational metrics;p $m$-th root metric\\

\textbf{MSC[2020]} 53B40; 53C60; 53C25; 62B10

\section{Introduction}
The study of locally dually flat and projectively flat Finsler metrics has attracted considerable attention because of its connections with both theoretical and applied mathematics. 
Dually flat structures were introduced in information geometry by Amari and Nagaoka \cite{amari2000}, and later extended to the Finsler setting by Shen \cite{shen2006}. 
Projectively flat Finsler metrics, characterized by the property that their geodesics are straight lines in suitable local coordinates, have been extensively studied since the pioneering work of Hamel \cite{hamel1903} and continue to play a central role in Finsler geometry. Recently, Li et al. \cite{LHZZ} studied dually flat and projectively flat Minkowskian product Finsler metrics. The Funk metric is one of the most prominent non-Riemannian examples that is simultaneously dually and projectively flat.

In the Riemannian setting, projective flatness is equivalent to constant sectional curvature. In Finsler geometry, however, projective flatness gives rise to a rich class of non-Riemannian metrics and strong rigidity phenomena. Understanding the interaction between projective flatness and other geometric properties, such as dual flatness, remains an active area of research.

The class of $m$-th root Finsler metrics provides an important source of non-Riemannian examples. Yu and You \cite{YY10} showed that the components of the fundamental tensor $g_{ij}$ of an $m$-th root metric are generally not rational functions of the fiber coordinates $y$, whereas the spray coefficients ($G^i$) are rational in $y$. Using this rational structure, Tayebi et al. \cite{AB11, AAE14, AAE} investigated various curvature properties of $m$-th root metrics. Subsequently, Tiwari, Prajapati \cite{BG,BG1} studied Kropina changes of $m$-th root metrics and obtained further examples whose spray coefficients remain rational in $y$, although the fundamental tensor itself is not necessarily rational. Motivated from these examples, B. Tiwari et.al.   \cite{BGR} have studied the Finsler metric in the following form:
\begin{equation}\label{eq1.1}
{F}=\sqrt{g_{ij}y^iy^j}
\end{equation} where
\begin{equation}\label{eq1.2}
g_{ij}=\eta(x,y)a_{ij}(x,y),
\end{equation}
$\eta(x,y)$ is an irrational function in $y$ and $a_{ij}(x,y)$ are rational functions in $y$.

This led to the introduction of Almost Rational Finsler metrics (AR-Finsler metrics) by Taha and Tiwari \cite{taha2023}, where rationality is imposed only on certain geometric objects rather than the entire fundamental tensor. Using the rationality of geometric quantities such as the Cartan torsion, geodesic spray, Landsberg curvature, and S-curvature, they proved several rigidity results. In particular, they showed that the S-curvature vanishes for metrics with isotropic S-curvature, that an AR-Finsler metric is weakly Landsberg whenever it has isotropic mean Landsberg curvature, and that an Einstein AR-Finsler metric is Ricci-flat.

The paper is organized as follows. In Section 2, we recall the basic notions. In Section 3, we investigate the local dual flatness of AR-Finsler metrics and we establish a characterization of locally dually flat AR-Finsler metrics and derive several consequences. In Section 4, we study locally projectively flat AR-Finsler metrics and obtain a corresponding characterization in terms of the AR-decomposition. Furthermore, we examine the simultaneous occurrence of local dual flatness and projective flatness and derive a compatibility relation connecting these two geometric structures. Motivated by the absence of known nontrivial examples and the restrictive nature of the obtained compatibility equation, we propose the following conjecture.

\medskip

\noindent\textbf{Conjecture.}
Let $F$ be a non-Riemannian AR-Finsler metric on a smooth manifold $M$. If $F$ is both locally dually flat and locally projectively flat, then $F$ is locally Minkowskian.

\section{Preliminaries}
Let $ M $ be an $n$-dimensional $C^{\infty}$-manifold, $T_{x}M$
denotes the tangent space of $M$ at $x$. The tangent bundle $ TM $
is the union of tangent spaces, $ TM:= \bigcup _{x \in M}T_xM $.
We denote the elements of $TM$ by $(x,y)$, where $x=(x^i)$ is a
point of $M$ and $y \in T_{x}M $.
We denote $TM_0=TM \setminus\left\lbrace 0\right\rbrace $.
\begin{definition}
	\cite{SSZ} \textnormal{A Finsler metric on $M$ is a function $F:TM \to [0,\infty)$ satisfying the following conditions:
		\begin{itemize}
			\item[(i)] $F$ is smooth on $TM_{0}$,
			\item[(ii)] $F$ is a positively 1-homogeneous on the fibers of tangent bundle $TM$,
			\item[(iii)]The Hessian of $\frac{F^2}{2}$ with element $g_{ij}=\frac{1}{2}\frac{\partial ^2F^2}{\partial y^i \partial y^j}$ is positive definite on $TM_0$.
		\end{itemize}
		The pair $(M,F)$ is called a Finsler space and $g_{ij}$ is called the fundamental tensor. }
	\end{definition}
	
	\begin{definition}
		\cite{taha2023} \textnormal{ A Finsler metric $F$ on a manifold $M$  is said to be an Almost Rational Finsler metric if the components $g_{ij}(x,y)$ of its metric tensor are expressed in the form 
			\begin{equation}\label{eq1.2}
			{g}_{ij}(x,y)=\eta(x,y)a_{ij}(x,y),
			\end{equation}
			where,
			\begin{enumerate}
				\item[(i)] $\eta : TM \xrightarrow{} (0,\infty) $ is a smooth function,
				\item[(ii)] the matrix $(a_{ij}(x,y)), 1\leq i,j \leq n$ is symmetric positive definite,
				\item[(iii)] for each $i,j,~~\{\eta(x,y),a_{ij}(x,y)\} $ are  positive homogeneous of degree zero function in the fiber coordinate $y$ and $a_{ij}(x,y)$ is a rational function in $y$.
			\end{enumerate}
			The pair $(M,F)$ is said to be an $AR$-Finsler manifold. Also, if $\eta$ is a rational function in $y$, then the pair $(M,F)$ is called a rational Finsler manifold.}
		\end{definition}
		
		The class of AR-Finsler metrics contains several important families of Finsler metrics.
		
		\begin{example}
			\begin{enumerate}
				
				\item[(i)]\cite{taha2023} \textnormal{The \(m\)-th root metrics
					$F=A^{1/m}$, 
					where $A=a_{i_1\cdots i_m}(x)y^{i_1}\cdots y^{i_m}$
					is a homogeneous polynomial of degree \(m\) in \(y\) is an AR-Finsler metric.}
					
					\item[(ii)]\cite{BG1}\textnormal{The generalized \(m\)-th root metrics $F=\sqrt{A^{2/m}+B},$ where $A=a_{i_1\cdots i_m}(x)y^{i_1}\cdots y^{i_m},$
						$B=b_{ij}(x)y^iy^j$ is an AR-Finsler metric.}
						
						\item[(iii)] \textnormal{Conformal deformations of AR-Finsler metrics is again an AR-Finsler metric. Let \(F\) be an AR-Finsler metric satisfying $g_{ij}=\eta a_{ij},$
							where \(a_{ij}\) is rational in \(y\). Consider the conformal deformation $\bar F=e^{\sigma(x)}F.$
							Its fundamental tensor is $\bar g_{ij}
							=
							e^{2\sigma(x)}g_{ij}
							=
							\bigl(e^{2\sigma(x)}\eta\bigr)a_{ij}.$
							Since multiplication by a smooth function of \(x\) preserves the rationality of \(a_{ij}\), we obtain $\bar g_{ij} =\bar\eta\, a_{ij},~
							\bar\eta=e^{2\sigma(x)}\eta.$
							Hence every conformal deformation of an AR-Finsler metric is again an AR-Finsler metric.}
							\item[(iv)] \textnormal{Every locally Minkowski \(m\)-th root metrics is an AR-Finsler metric.}
						\end{enumerate}
					\end{example}
					
					The following proposition seems not to have been explicitly recorded in the literature. It provides a simple sufficient condition for an m-th root metric to be locally Minkowskian.
					\begin{proposition}
						Let $F=A^{1/m}, ~ m\ge 3,$
						be an \(m\)-th root Finsler metric on a manifold \(M\), where $A=a_{i_1\cdots i_m}(x)y^{i_1}\cdots y^{i_m}$ is a homogeneous polynomial of degree \(m\) in \(y\). Let \(g_{ij}\) denote the fundamental tensor of \(F\). If
						\[
						(g_{ij})_{x^k}y^iy^jy^k=0,
						\]
						then \(F\) is locally Minkowskian.
					\end{proposition}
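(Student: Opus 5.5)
The plan is to turn the hypothesis into a condition on the polynomial $A$, and then try to show that the spray coefficients $G^i$ vanish in the given coordinates. If that works, $F$ is locally Minkowskian there.

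\emph{Step 1: reduce to a condition on $A$.} Contract the hypothesis using $g_{ij}y^iy^j=F^2$. Since $y$ does not depend on $x$,
\[
(g_{ij})_{x^k}y^iy^jy^k=(F^2)_{x^k}y^k=\tfrac{2}{m}A^{\frac{2}{m}-1}A_{x^k}y^k .
\]
As $A>0$ on $TM_0$, the hypothesis is equivalent to the polynomial identity $A_{x^k}y^k=0$. Comparing coefficients, this says that the total symmetrization of $\partial_k a_{i_1\cdots i_m}$ over its $m+1$ indices vanishes.

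\emph{Step 2: insert this into the spray.} Use the standard formula for the spray of an $m$-th root metric,
\[
G^i=\tfrac{1}{2}A^{il}\Bigl(A_{0l}-\tfrac{1}{m-1}A_{x^l}\Bigr)
\]
(up to the usual normalisation), where $A_{0l}=A_{x^ky^l}y^k$ and $(A^{il})$ is the inverse of $(A_{y^iy^j})$. Differentiating $A_{x^k}y^k=0$ with respect to $y^l$ gives $A_{0l}=-A_{x^l}$. Hence
\[
G^i=-\tfrac{m}{2(m-1)}A^{il}A_{x^l}.
\]
The goal would then be to show $A_{x^l}=0$, which makes all coefficients constant.

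\emph{Main obstacle.} Step 2 is where I expect the argument to break. The identity $A_{x^k}y^k=0$ is a Killing-tensor-type condition for the flat connection $\partial_x$, and such tensors need not be constant. For $m=2$ one can take
\[
A=y_1^2+2x^2y_1y_2+(C-2x^1)y_2^2 ,
\]
which satisfies $A_{x^k}y^k=0$ and has nonconstant coefficients. Squaring gives an example for $m=4$, since $(B^2)_{x^k}y^k=2B\,B_{x^k}y^k$. Whether these quartic examples fail to be locally Minkowskian depends on the Riemann curvature of the quadratic form $B$. So the first thing I would do is compute the Gauss curvature of this two-dimensional $B$, which I have not done.

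If the curvature is nonzero, the proposition as stated is false. One would then need to strengthen the hypothesis, for instance to $(g_{ij})_{x^k}y^k=0$, or equivalently $A_{x^k}=0$ after contraction with $y^iy^j$ and polarization. Under $A_{x^k}=0$ the conclusion is immediate: the $a_{i_1\cdots i_m}$ are constant.

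If instead the curvature vanishes, I would try to close the argument by an integrability step. From $G^i=-\tfrac{m}{2(m-1)}A^{il}A_{x^l}$, compute the Riemann curvature, and use the $m\ge 3$ non-Riemannian structure (rationality of $G^i$ against irrationality of $F$) to force $R^i{}_k=0$ together with the Berwald condition. Flat Berwald metrics are locally Minkowskian. Verifying this would be the step needing the most care.
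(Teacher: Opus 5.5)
Your Step 1 is exactly the paper's argument. The paper also reduces the hypothesis to $(F^2)_{x^k}y^k=0$, hence to $\partial_k a_{i_1\cdots i_m}\,y^ky^{i_1}\cdots y^{i_m}=0$. It then concludes that every $\partial_k a_{i_1\cdots i_m}$ vanishes because this form of degree $m+1$ vanishes. You correctly rejected that inference: it only kills the total symmetrization of $\partial_k a_{i_1\cdots i_m}$ over $(k,i_1,\dots,i_m)$, i.e.\ it says $A$ is a Killing tensor of the flat metric.

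The curvature computation you postponed settles the matter against the statement. Completing the square,
\[
B=(y^1+x^2y^2)^2+\bigl(C-2x^1-(x^2)^2\bigr)(y^2)^2 .
\]
In the coordinates $w=x^1+\tfrac12 (x^2)^2$, $v=x^2$, the form $B$ is the metric $dw^2+(C-2w)\,dv^2$. Its Gauss curvature is $-h''/h$ with $h=\sqrt{C-2w}$, that is
\[
K=\frac{1}{\bigl(C-2x^1-(x^2)^2\bigr)^2}\neq 0 .
\]
Hence, on the region $C-2x^1-(x^2)^2>0$, the fourth-root metric $F=(B^2)^{1/4}=\sqrt{B}$ satisfies $(g_{ij})_{x^k}y^iy^jy^k=B_{x^k}y^k=0$ but is not locally Minkowskian. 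For $n>2$, add $\sum_{a\ge 3}(y^a)^2$ to $B$.

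If one objects that this $F$ is Riemannian, take $A_\varepsilon=B^2+\varepsilon (y^1)^4$. It still satisfies $(A_\varepsilon)_{x^k}y^k=0$ and is strongly convex near a fixed point for small $\varepsilon$. By continuity in $\varepsilon$, its Riemann curvature is also nonzero for small $\varepsilon$. So the proposition is false as stated, and the paper's proof fails at the coefficient-comparison step. Your fallback branch (forcing $R^i{}_k=0$ together with the Berwald condition) therefore cannot be completed.

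Your proposed repair is the right kind. Under $(g_{ij})_{x^k}y^iy^j=0$, which is literally $(F^2)_{x^k}=0$, the conclusion is immediate. Your variant $(g_{ij})_{x^k}y^k=0$ also suffices, but not by contracting with $y^iy^j$, which just returns $A_{x^k}y^k=0$. Instead, argue as follows.
\begin{itemize}
\item Set $\Phi_k:=(F^2)_{x^k}$, so the hypothesis reads $(\Phi_k)_{y^iy^j}y^k=0$.
\item Contracting with $y^iy^j$ gives $\Phi_ky^k=0$.
\item Differentiating $\Phi_ky^k=0$ twice in $y$ and using the hypothesis gives $(\Phi_i)_{y^j}+(\Phi_j)_{y^i}=0$.
\item Contract with $y^j$, using $(\Phi_j)_{y^i}y^j=-\Phi_i$ (from one differentiation of $\Phi_ky^k=0$) and $(\Phi_i)_{y^j}y^j=2\Phi_i$ (from $2$-homogeneity). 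This yields $\Phi_i=0$.
\end{itemize}
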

					
					\begin{proof}
						Since $F^2=g_{ij}y^iy^j$ differentiating both sides with respect to \(x^k\), we obtain
						\[
						(F^2)_{x^k}=(g_{ij})_{x^k}y^iy^j.
						\]
						Multiplying by \(y^k\), it follows that
						\[
						(F^2)_{x^k}y^k=(g_{ij})_{x^k}y^iy^jy^k=0.
						\]
						Hence $(F^2)_{x^k}y^k=0.$ Since $F=A^{1/m}$,
						we have
						\[
						F^2=A^{2/m}.
						\]
						Differentiating with respect to \(x^k\),
						\[
						(F^2)_{x^k}
						=
						\frac{2}{m}A^{\frac{2-m}{m}}A_{x^k}.
						\]
						Therefore, we obtain $\frac{2}{m}A^{\frac{2-m}{m}}A_{x^k}y^k=0.$
						Since \(A>0\) on the slit tangent bundle, it follows that $A_{x^k}y^k=0.$
						
						Now
						\[
						A=a_{i_1\cdots i_m}(x)y^{i_1}\cdots y^{i_m},
						\]
						and therefore
						\[
						A_{x^k}
						=
						\frac{\partial a_{i_1\cdots i_m}}{\partial x^k}
						\,y^{i_1}\cdots y^{i_m}.
						\]
						Multiplying by \(y^k\), we have
						\begin{equation}\label{eq3}
						\frac{\partial a_{i_1\cdots i_m}}{\partial x^k}
						\,y^k y^{i_1}\cdots y^{i_m}=0.    
						\end{equation}

						The left-hand side of \eqref{eq3} is a homogeneous polynomial of degree
						\(m+1\) in the variables \(y^1,\ldots,y^n\). Since it vanishes
						identically for all \(y\), every coefficient of this polynomial must vanish. Hence
						\[
						\frac{\partial a_{i_1\cdots i_m}}{\partial x^k}=0,
						\qquad
						\forall\, i_1,\ldots,i_m,\; k.
						\]
						
						Therefore all coefficients of \(A\) are constant in the base variables \(x\), and thus $A=A(y)$. Consequently, $F=A(y)^{1/m}$
						is independent of the position coordinates \(x\). Hence \(F\) is a Minkowski norm in each tangent space and therefore is locally Minkowskian.
					\end{proof}
					
					\section{Locally dually flat AR-Finsler metrics}\label{sec:dually-flat-AR}
					Locally dually flat Finsler metrics constitute a  distinguished class, originating from information geometry. A Finsler metric is said to be locally dually flat if, in suitable local coordinates, its spray coefficients are derived from a potential function, equivalently characterized by the partial differential equation \begin{equation}\label{eq22}
					[F^2]_{x^k y^l} y^k - 2[F^2]_{x^l} = 0.
					\end{equation}

					\begin{theorem}\label{thm:dually-flat-AR-equivalence}
						Let $F$ be an AR--Finsler metric.
						Then $F$ is locally dually flat if and
						only if the totally symmetric $3$--tensor
						\begin{equation}
						U^{(l)}_{ijk}(x,y) \;:=\; (\eta_{y^l})_{x^k}\,a_{ij} + \eta_{y^l}\,(a_{ij})_{x^k}.
						\end{equation}
						satisfies
						\begin{equation}
						U^{(l)}_{(ijk)}(x,y) = 0
						\qquad\text{for all }(x,y)\in TM\setminus\{0\},
						\end{equation}
						where $U_{(ijk)}$ denotes the full symmetrization in the indices $i,j,k$.
					\end{theorem}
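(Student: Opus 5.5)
Starting from the defining equation \eqref{eq22}, we substitute $F^2 = g_{ij}y^iy^j = \eta\, a_{ij}y^iy^j$. Then $[F^2]_{x^l} = (\eta a_{ij})_{x^l}y^iy^j$. Differentiating in $y^l$ and contracting with $y^k$ gives
\[
[F^2]_{x^k y^l}y^k = \bigl((\eta a_{ij})_{y^l}\bigr)_{x^k}y^iy^jy^k + 2(\eta a_{il})_{x^k}y^iy^k .
\]
The zero-homogeneity of $\eta$ and $a_{ij}$ in $y$ (condition (iii) of the definition of AR-Finsler metrics) lets us rewrite the second term. Together with the standard identity $g_{ij,k}y^k=0$, which holds because $C_{ijk}y^k=0$ for the full fundamental tensor, this converts $(\eta a_{il})_{x^k}y^iy^k$ into an expression of the same type as $2[F^2]_{x^l}$. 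We intend to show that the terms without a $y^l$-derivative cancel exactly against $-2[F^2]_{x^l}$. The expected outcome is
\[
[F^2]_{x^ky^l}y^k - 2[F^2]_{x^l} = \bigl(\eta_{y^l}a_{ij}+\eta\,(a_{ij})_{y^l}\bigr)_{x^k}y^iy^jy^k .
\]

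The next step is to separate the $\eta_{y^l}$ part from the $(a_{ij})_{y^l}$ part. This is where the rational/irrational dichotomy enters. The contraction $(\eta (a_{ij})_{y^l})_{x^k}y^iy^jy^k$ should be related to the previous one through the zero-homogeneity relations $y^i(a_{ij})_{y^l}$ and $y^i(\eta a_{ij})_{y^l}=\eta a_{jl}$, the latter coming from $g_{ij}=\tfrac12[F^2]_{y^iy^j}$. This lets us absorb it, so that the dual flatness equation reduces to
\[
U^{(l)}_{ijk}y^iy^jy^k = 0,\qquad U^{(l)}_{ijk}=(\eta_{y^l})_{x^k}a_{ij}+\eta_{y^l}(a_{ij})_{x^k}.
\]
If this absorption fails, the fallback is to use the fact that $\eta$ is irrational while $a_{ij}$ is rational in $y$. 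We would then compare the parts proportional to $\eta_{y^l}$-type terms and rational terms separately, as in \cite{taha2023}.

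Finally, a cubic contraction $T_{ijk}y^iy^jy^k$ depends only on the full symmetrization $T_{(ijk)}$. Hence vanishing of $U^{(l)}_{ijk}y^iy^jy^k$ for every $y$ is what we want to identify with $U^{(l)}_{(ijk)}=0$. One direction is immediate. For the converse, since the coefficients depend on $y$, we cannot simply polarize as in the Proposition above. Instead we would interpret the vanishing as holding for the symmetric tensor field, and argue that both conditions are equivalent to \eqref{eq22} through the chain of identities above.

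The main obstacle is the cancellation in the first two steps. In particular, justifying that the $\eta\,(a_{ij})_{y^l}$ contributions disappear requires careful use of homogeneity, and possibly of the rational/irrational separation. The last equivalence, symmetrized tensor versus contraction, is also delicate for $y$-dependent coefficients; we would state it as the defining interpretation of the condition.
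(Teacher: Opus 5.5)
The step that fails is the cancellation in your first paragraph. Since $g_{ij}=\eta a_{ij}$ is the fundamental tensor, $[F^2]_{y^l}=2g_{lj}y^j$, and therefore
\[
[F^2]_{x^ky^l}y^k-2[F^2]_{x^l}=2\big((g_{lj})_{x^k}y^jy^k-(g_{ij})_{x^l}y^iy^j\big).
\]
Converting $2(\eta a_{il})_{x^k}y^iy^k$ into $2[F^2]_{x^l}$ does not follow from homogeneity. Homogeneity identities involve only $y$-derivatives, so they cannot move the free index $l$ from the tensor slot into the $x$-derivative slot. That conversion is the dual flatness equation itself, so using it is circular.

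Your ``expected outcome'' is also vacuous. Its right-hand side equals $(g_{ij})_{x^ky^l}y^iy^jy^k=\big((g_{ij})_{y^l}y^i\big)_{x^k}y^jy^k=0$, because $(g_{ij})_{y^l}y^i=2C_{ijl}y^i=0$. So it would make every Finsler metric dually flat.

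The identity you plan to use for the absorption is misstated: $y^i(\eta a_{ij})_{y^l}=0$, whereas $\eta a_{jl}=(\eta a_{ij}y^i)_{y^l}$. The correct identity gives $\eta_{y^l}a_{ij}y^i=-\eta\,(a_{ij})_{y^l}y^i$. Hence the $\eta_{y^l}$-part cannot be split off from the $(a_{ij})_{y^l}$-part, and nothing ties $U^{(l)}_{ijk}y^iy^jy^k$ to \eqref{eq22}.

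Your worry about the last step is justified. $U^{(l)}$ depends on $y$, so $U^{(l)}_{ijk}y^iy^jy^k=0$ does not imply $U^{(l)}_{(ijk)}=0$, and declaring it a ``defining interpretation'' is not an argument.

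No repair is possible, because the statement is false in both directions. Here \eqref{eq22} is read, as the paper does, in the coordinates in which $U^{(l)}$ is computed.

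\textbf{The ``if'' direction fails.} Any Riemannian metric, written with $\eta\equiv1$ and $a_{ij}=a_{ij}(x)$, has $U^{(l)}\equiv0$. Yet for $F^2=e^{2x^1}|y|^2$ with $n\ge2$ one finds $[F^2]_{x^ky^l}y^k-2[F^2]_{x^l}=4e^{2x^1}\big(y^1y^l-\delta_{1l}|y|^2\big)\neq0$. The paper's own Beltrami--Klein example in Section 4 has $\eta\equiv1$ and is asserted not to be dually flat, so it contradicts the theorem in the same way.

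\textbf{The ``only if'' direction fails.} The splitting is not unique: $(\eta,a_{ij})\mapsto(r\eta,a_{ij}/r)$ with $r>0$ rational and $0$-homogeneous leaves $F$ unchanged but changes $U^{(l)}$. Take the Euclidean metric with $n=2$, $\eta=r:=1+(x^1)^2(y^2)^2/|y|^2$ and $a_{ij}=\delta_{ij}/r$. Then $U^{(2)}_{ijk}y^iy^jy^k=4x^1(y^1)^3y^2/(r^2|y|^2)\neq0$, although the metric satisfies \eqref{eq22}.

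\textbf{The paper's proof has the same gaps you ran into.}
It discards $\eta_{x^k}(a_{ij})_{y^l}y^iy^jy^k$ and $\eta(a_{ij})_{x^ky^l}y^iy^jy^k$ by appeal to Euler's theorem, which only controls contraction with $y^l$. It cancels $2(\eta a_{lj})_{x^k}y^jy^k$ against $2(\eta a_{ij})_{x^l}y^iy^j$ without justification, and this is again the dual flatness equation. It also treats $U^{(l)}_{ijk}y^iy^jy^k$ as a cubic polynomial in $y$.

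What is true is only that $F$ satisfies \eqref{eq22} if and only if $(g_{lj})_{x^k}y^jy^k=(g_{ij})_{x^l}y^iy^j$, with $g_{ij}=\eta a_{ij}$. The tensor $U^{(l)}$ plays no role.
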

					\begin{proof} 
						\begin{equation}
						F^2(x,y)=
						\eta(x,y)\,a_{ij}(x,y)y^iy^j.   
						\end{equation}
						
						Differentiating with respect to $x^k$, we obtain
						
						\begin{equation}
						[F^2]_{x^k}
						=
						\eta_{x^k}a_{ij}y^iy^j
						+
						\eta(a_{ij})_{x^k}y^iy^j.
						\end{equation}
						
						Now differentiate with respect to $y^l$:
						
						\begin{align}
							[F^2]_{x^ky^l}
							&=
							\frac{\partial}{\partial y^l}
							\Big(
							\eta_{x^k}a_{ij}y^iy^j
							\Big)
							+
							\frac{\partial}{\partial y^l}
							\Big(
							\eta(a_{ij})_{x^k}y^iy^j
							\Big)
							\nonumber\\
							&=
							\eta_{x^ky^l}a_{ij}y^iy^j
							+
							\eta_{x^k}(a_{ij})_{y^l}y^iy^j
							+
							2\eta_{x^k}a_{lj}y^j
							\nonumber\\
							&\qquad
							+
							\eta_{y^l}(a_{ij})_{x^k}y^iy^j
							+
							\eta(a_{ij})_{x^ky^l}y^iy^j
							+
							2\eta(a_{lj})_{x^k}y^j.
						\end{align}
						Multiplying by $y^k$ gives
						\begin{align}
							[F^2]_{x^ky^l}y^k
							&=
							\eta_{x^ky^l}a_{ij}y^iy^jy^k
							+
							\eta_{x^k}(a_{ij})_{y^l}y^iy^jy^k
							+
							2\eta_{x^k}a_{lj}y^jy^k
							\nonumber\\
							&\qquad
							+
							\eta_{y^l}(a_{ij})_{x^k}y^iy^jy^k
							+
							\eta(a_{ij})_{x^ky^l}y^iy^jy^k
							+
							2\eta(a_{lj})_{x^k}y^jy^k.
							\label{full}
						\end{align}
						
						Since $a_{ij}(x,y)$ is assumed to be positively homogeneous of degree
						zero in $y$, Euler's theorem yields
						\begin{align}
							(a_{ij})_{y^l}y^l=0, ~~ (a_{ij})_{x^ky^l}y^l=0.
							\label{Euler}
						\end{align}
						
						Therefore \eqref{full} reduces to
						
						\begin{align}
							[F^2]_{x^ky^l}y^k
							=
							\eta_{x^ky^l}a_{ij}y^iy^jy^k
							+
							2\eta_{x^k}a_{lj}y^jy^k
							+
							\eta_{y^l}(a_{ij})_{x^k}y^iy^jy^k
							+
							2\eta(a_{lj})_{x^k}y^jy^k.
						\end{align}
						Substituting into the \eqref{eq22}, we get,
						\[
						\big[(\eta_{y^l})_{x^k}\,a_{ij} + \eta_{y^l}\,(a_{ij})_{x^k}\big] y^i y^j y^k = 0.
						\]                    or                          \begin{equation}\label{eq:6}
						\big(\eta_{y^l} a_{ij}\big)_{x^k}\,y^i y^j y^k = 0,
						\qquad l\in\{1,\dots,n\}
						\end{equation}
						Now we 
						introduce the \(3\)-tensor (for each fixed \(l\)) as 
						\begin{equation}\label{eq:1}
						U^{(l)}_{ijk}(x,y) \;:=\; \big(\eta_{y^l} a_{ij}\big)_{x^k}\,=(\eta_{y^l})_{x^k}\,a_{ij} + \eta_{y^l}\,(a_{ij})_{x^k}.
						\end{equation}
						Then \eqref{eq:6} can be written  compactly as
						\begin{equation}\label{eq:7}
						U^{(l)}_{ijk}(x,y)\,y^i y^j y^k \;=\; 0.
						\end{equation}

						The left-hand side of \eqref{eq:7} is a homogeneous polynomial of degree \(3\)
						in \(y\). A polynomial in \(y\) that vanishes for all \(y\) has all its
						coefficients zero. Hence \eqref{eq:7} is equivalent to the vanishing of the full
						symmetrization of \(U^{(l)}\) in indices \((i,j,k)\):
						\begin{equation}\label{eq:8}
						U^{(l)}_{(ijk)}(x,y)\;=\;0\qquad\text{for all }(x,y),
						\end{equation}
						where \(U_{(ijk)}\) denotes full symmetrization over \(i,j,k\).
					\end{proof} 
					\begin{remark}
						\textnormal{Expanding \eqref{eq:8} gives the symmetric condition, for all indices \(i,j,k\),
							\[
							(\eta_{y^l})_{x^k}\,a_{ij} + (\eta_{y^l})_{x^i}\,a_{jk} + (\eta_{y^l})_{x^j}\,a_{ki}
							\;+\; \eta_{y^l}\big( (a_{ij})_{x^k} + (a_{jk})_{x^i} + (a_{ki})_{x^j}\big)
							\;=\;0.
							\]  
							This is the explicit tensor PDE system equivalent to \eqref{eq:8}.}
						\end{remark}

						\begin{corollary}
							Let $F$ be an AR-Finsler metric. Suppose that the rational tensor $a_{ij}$ is independent of the base coordinates $x$, i.e., $(a_{ij})_{x^{k}}=0$. Then \(F\) is locally dually flat, if and only if
							\begin{equation}
							\eta(x,y)=\Phi(y)+\Psi(x),   
							\end{equation}
							
							where \(\Phi\) is a function of the fiber coordinates only and \(\Psi\) is a function of the base coordinates only.
						\end{corollary}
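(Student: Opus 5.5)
The plan is to specialize Theorem~\ref{thm:dually-flat-AR-equivalence} to the case $(a_{ij})_{x^k}=0$ and read off the resulting PDE for $\eta$. With $(a_{ij})_{x^k}=0$ the tensor becomes $U^{(l)}_{ijk}=(\eta_{y^l})_{x^k}\,a_{ij}$. By the theorem, $F$ is locally dually flat if and only if its full symmetrization vanishes for each fixed $l$. Writing $T^{(l)}_k:=(\eta_{y^l})_{x^k}$, the explicit form in the Remark following the theorem gives, for all $i,j,k$,
\[
T^{(l)}_k\,a_{ij}+T^{(l)}_i\,a_{jk}+T^{(l)}_j\,a_{ki}=0 .
\]

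The key algebraic step is to show that this forces $T^{(l)}_k=0$. Since $(a_{ij})$ is positive definite, its inverse $(a^{ij})$ exists, and I will contract the identity above with $a^{ij}$. The first term gives $n\,T^{(l)}_k$, and each of the other two gives $T^{(l)}_k$, because $a^{ij}a_{jk}=\delta^i_k$. Hence $(n+2)\,T^{(l)}_k=0$, that is,
\[
\eta_{x^ky^l}=0 \qquad\text{for all } k,l .
\]
This is the step I expect to be the real content. I rely on the theorem's characterization in its symmetrized (coefficientwise) form, and not only on the contracted equation $\eta_{x^ky^l}y^ky^iy^ja_{ij}=0$. The contracted form alone would only give $\eta_{x^ky^l}y^k=0$ after dividing by $a_{ij}y^iy^j>0$, and passing from that to the full vanishing of the mixed derivatives is not obvious. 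So I will invoke the theorem exactly as stated.

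Next I integrate. Since $\partial_{x^k}(\eta_{y^l})=0$, each $\eta_{y^l}$ is a function of $y$ alone on a coordinate neighbourhood (taken connected in $x$). Fix a base point $x_0$ and set $\Phi(y):=\eta(x_0,y)$, so that $\Phi_{y^l}=\eta_{y^l}$. Then $\eta-\Phi$ has vanishing $y$-derivatives, so on the connected slit fibres ($n\ge 2$) it is a function $\Psi(x)$. This gives $\eta=\Phi(y)+\Psi(x)$. By zero-homogeneity of $\eta$, $\Phi$ is also homogeneous of degree zero.

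For the converse, assume $\eta=\Phi(y)+\Psi(x)$. Then $(\eta_{y^l})_{x^k}=\Phi_{y^l x^k}=0$, and together with $(a_{ij})_{x^k}=0$ this gives $U^{(l)}_{ijk}\equiv0$. Hence its symmetrization vanishes, and Theorem~\ref{thm:dually-flat-AR-equivalence} yields local dual flatness.
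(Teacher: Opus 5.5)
Your plan has the same skeleton as the paper's proof: specialize Theorem 3.1 to $(a_{ij})_{x^k}=0$, extract $\eta_{x^ky^l}=0$, then integrate. Your key step, however, is different and better. The paper contracts the symmetrized identity with $y^iy^jy^k$ and gets only $(\eta_{y^l})_{x^k}y^k=0$. It then concludes $(\eta_{y^l})_{x^k}=0$ because the left side is ``linear in $y$''. That is invalid, since the coefficients depend on $y$: e.g.\ $\eta=c+\langle x,y\rangle/|y|$ satisfies $\eta_{x^ky^l}y^k=0$ while $\eta_{x^ky^l}\neq0$. Your contraction with $a^{ij}$, giving $(n+2)(\eta_{y^l})_{x^k}=0$, correctly extracts $\eta_{x^ky^l}=0$ from the symmetrized identity. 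You also argue the converse, which the paper's proof omits.

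The genuine gap is the step you deliberately lean on: Theorem 3.1 ``exactly as stated''. That theorem is false, and so is the ``if'' half of the corollary, so your converse argument establishes a false claim. Take $a_{ij}=\delta_{ij}$ and $\eta=1+|x|^2$, so $\Phi$ is constant and $\Psi=|x|^2$. This is an AR-metric with $(a_{ij})_{x^k}=0$ and $U^{(l)}\equiv0$. Yet for $F^2=(1+|x|^2)|y|^2$,
\[
(F^2)_{x^ky^l}y^k-2(F^2)_{x^l}=4\bigl((x\cdot y)\,y^l-|y|^2x^l\bigr),
\]
which equals $-2$ at $x=\tfrac12e_1$, $y=e_2$, $l=1$. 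The paper's own Beltrami--Klein example, with $\eta\equiv1$ and hence $U\equiv0$, contradicts Theorem 3.1 in the same way. The derivation of Theorem 3.1 breaks in three places.
\begin{enumerate}
\item[(a)] Euler's relation $(a_{ij})_{y^l}y^l=0$ is used to discard $(a_{ij})_{y^l}y^iy^j$. That is a different contraction, in fact equal to $-\eta^{-1}\eta_{y^l}a_{ij}y^iy^j$.
\item[(b)] The terms $2(\eta a_{lj})_{x^k}y^jy^k-2(\eta a_{ij})_{x^l}y^iy^j$ are dropped, although they carry the whole Hessian condition in the Riemannian case.
\item[(c)] The final coefficient comparison ignores the $y$-dependence of $U^{(l)}_{ijk}$.
\end{enumerate}
So neither direction can safely be routed through Theorem 3.1.

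What is true for $n\ge2$ is a stronger forward statement, which you can prove directly. From $g_{ij}y^j=\tfrac12(F^2)_{y^i}$ with $g_{ij}=\eta a_{ij}$ and $Q:=a_{ij}y^iy^j$, one gets $\eta_{y^i}Q+\eta\,(a_{jk})_{y^i}y^jy^k=0$. When $a_{ij}$ is $x$-independent, this makes $(\ln\eta)_{y^i}$ independent of $x$. Hence $\eta=\sigma(x)\tau(y)$ and $F^2=\sigma(x)N(y)$ with $N>0$. Dual flatness then reads $(\sigma_{x^k}y^k)N_{y^l}=2\sigma_{x^l}N$. Choosing $y\neq0$ with $\sigma_{x^k}y^k=0$ forces $d\sigma=0$. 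Thus dual flatness makes $\eta$ locally independent of $x$ (your splitting with $\Psi$ constant) and $F$ locally Minkowskian. The same computation with $\eta=\Phi+\Psi$ shows the converse holds only when $\Psi$ is constant.
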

						
						\begin{proof}
							By Theorem 3.1, local dual flatness is equivalent to $(\eta_{y^{l}})_{x^{k}}a_{ij}+\eta_{y^{l}}(a_{ij})_{x^{k}}=0$. Since \((a_{ij})_{x^{k}}=0\), we obtain $(\eta_{y^{l}})_{x^{k}}a_{ij}=0$.
							
							Therefore, the condition \(U^{(l)}_{(ijk)}=0\) yields
							\[
							(\eta_{y^{l}})_{x^{k}}a_{ij}
							+
							(\eta_{y^{l}})_{x^{i}}a_{jk}
							+
							(\eta_{y^{l}})_{x^{j}}a_{ki}
							=0.
							\]
							
							Contracting with \(y^{i}y^{j}y^{k}\), we obtain
							\[
							(\eta_{y^{l}})_{x^{k}}a_{ij}y^{i}y^{j}y^{k}
							+
							(\eta_{y^{l}})_{x^{i}}a_{jk}y^{i}y^{j}y^{k}
							+
							(\eta_{y^{l}})_{x^{j}}a_{ki}y^{i}y^{j}y^{k}
							=0.
							\]
							
							Since the three terms are identical after relabelling dummy indices, it follows that
							
							\[
							(\eta_{y^{l}})_{x^{k}}a_{ij}y^{i}y^{j}y^{k}=0.
							\]
							
							Because \(a_{ij}\) is positive definite, $a_{ij}y^{i}y^{j}>0,
							\qquad y\neq 0.$
							
							Hence
							\[
							(\eta_{y^{l}})_{x^{k}}y^{k}=0,
							\qquad \forall\, y.
							\]
							
							Since the left-hand side is linear in \(y\), the above identity holding for all \(y\) implies
							\[
							(\eta_{y^{l}})_{x^{k}}=0,
							\qquad \forall\, k,l.
							\]
							
							Integrating with respect to the variables \(x\) and \(y\), we conclude that \(\eta\) splits as
							\[
							\eta(x,y)=\Phi(y)+\Psi(x),
							\]
							where \(\Phi\) depends only on the fiber coordinates and \(\Psi\) depends only on the base coordinates.
						\end{proof}

						\begin{example}\textbf{(The \(m\)-th root metrics)} \textnormal{ Let \(F=\sqrt[m]{A}\) be an \(m\)-th root Finsler metric,}
							\[
							A=a_{i_1\cdots i_m}(x)y^{i_1}\cdots y^{i_m},\qquad m\ge3,
							\]
							and define
							\[
							A_i=\frac{1}{m}\frac{\partial A}{\partial y^i},\qquad
							A_{ij}=\frac{1}{m(m-1)}\frac{\partial^2 A}{\partial y^i\partial y^j}.
							\]
							\textnormal{The fundamental tensor admits the almost rational representation}
							\[
							g_{ij}=F^{2-m}a_{ij},
							\qquad
							a_{ij}=(m-1)A_{ij}-\frac{m-2}{A}A_iA_j.
							\]
							Equivalently,
							\[
							F^2=\eta\,a_{ij}y^iy^j,
							\qquad
							\eta=F^{m-2}.
							\]
							\textnormal{Here the rational tensor \(a_{ij}\)
								constructed from \(A_{ij}\) and \(A_i\) is a homogeneous polynomial of degree \(m-2\) in \(y\). Now we solve the equation}
								\begin{equation}\label{eq:mroot-main}
								(\eta_{y^l}a_{ij})_{x^k}\,y^i y^j y^k=0,
								\qquad l=1,\dots,n.
								\end{equation} or
								\begin{equation}\label{eq:mroot-expanded}
								\Big[(\eta_{y^l})_{x^k}a_{ij}
								+\eta_{y^l}(a_{ij})_{x^k}\Big]y^i y^j y^k=0.
								\end{equation}
							\end{example}

							\textbf{Remark:}
							The condition shows that for \(m\)-th root metrics the equation
							\((\eta_{y^l}a_{ij})_{x^k}y^iy^jy^k=0\) is extremely rigid:
							any \(y\)-dependence of \(\eta\) forces the metric to be locally Minkowskian.
							This rigidity is consistent with known results on Einstein, weak Einstein, and scalar flag curvature \(m\)-th root metrics.

							\begin{example}
								\textnormal{Let
									\begin{equation}
									\psi(x)=\frac12\sum_{i=1}^{n}(x^i)^2+\frac{\lambda}{4}\left(\sum_{i=1}^{n}(x^i)^2 \right)^2,\qquad \lambda>0,    
									\end{equation}
									and define $a_{ij}(x)=\frac{\partial^2\psi}{\partial x^i\partial x^j}.$ Further, let $\eta(x,y)\equiv 1.$ Then $F^2=a_{ij}(x)y^iy^j$.\\
									Let $r^2=\sum_{k=1}^{n}(x^k)^2.$ Then $\psi(x)=\frac12r^2+\frac{\lambda}{4}r^4.$
									Differentiating with respect to $x^i$, and then with $x^j$ we obtain
									\begin{equation}
									a_{ij}=\psi_{ij}=\delta_{ij}+\lambda r^2\delta_{ij}+2\lambda x_ix_j.    
									\end{equation}
									Hence, $a_{ij}=(1+\lambda r^2)\delta_{ij}+2\lambda x_ix_j.$ Consequently, 
									\begin{equation}
									F^2=a_{ij}y^iy^j=(1+\lambda r^2)|y|^2+2\lambda(x\cdot y)^2.   
									\end{equation}
									Since $1+\lambda r^2>0,$ and $2\lambda(x\cdot y)^2\ge 0,$ it follows that  $(a_{ij})$ is positive definite and hence $F$ is an almost rational Finsler metric. Also simple calculations show that $F$ is locally dually flat and $F$ is not locally Minkowskian.}

								\end{example}
								
								\section{Locally Projectively Flat AR-Finsler Metrics} 
								
								Projectively flat Finsler metrics are characterized by having straight-line geodesics in suitable local coordinates. While in Riemannian geometry projective flatness is equivalent to constant sectional curvature, Finsler geometry admits genuinely non-Riemannian examples such as the $m$-th root Finsler metrics. These rigidity properties motivate the study of projective flatness in the broader class of Almost Rational (AR) Finsler metrics, where rationality is imposed only on selected geometric objects instead of the whole fundamental tensor.\\
								It is known \cite{hamel1903} that a Finsler metric is projectively flat if and only if
								\begin{equation}\label{2.3e}
								F_{x^k y^l} y^k - F_{x^l} = 0.
								\end{equation}
								
								\begin{lemma}\label{lem1}
									If $g_{ij}(x,y)$ is the fundamental tensor of a Finsler metric, then
									\[
									(g_{ij})_{x^k y^l}y^i y^j y^k
									=
									-(g_{lj})_{x^k}y^j y^k.
									\]
								\end{lemma}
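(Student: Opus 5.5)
The plan is to compute the left-hand side from two structural facts about the fundamental tensor. First, $g_{ij}$ is positively homogeneous of degree zero in $y$. Second, $(g_{ij})_{y^l}=2C_{ijl}$, where the Cartan tensor $C_{ijl}$ is totally symmetric and satisfies $C_{ijl}y^i=0$. Both facts survive differentiation in $x$, because $\partial_{x^k}$ commutes with $\partial_{y^l}$ and with multiplication by $y^i$. I would then rewrite the left-hand side by moving the $y$-derivative outside the contraction and compare the result with $-(g_{lj})_{x^k}y^jy^k$.

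\textbf{Step 1.} Differentiate the scalar $\Phi_l:=(g_{ij})_{x^k}y^iy^jy^k$ with respect to $y^l$ using the product rule:
\[
\partial_{y^l}\Phi
=(g_{ij})_{x^ky^l}y^iy^jy^k+2(g_{lj})_{x^k}y^jy^k+(g_{ij})_{x^l}y^iy^j .
\]
This expresses the left-hand side of Lemma \ref{lem1} through a $y^l$-derivative and two lower-order terms. The middle term is exactly the quantity on the right-hand side of the lemma.

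\textbf{Step 2.} Evaluate the first term directly. Contracting $(g_{ij})_{y^l}=2C_{ijl}$ with $y^i$ gives zero, and differentiating in $x^k$ gives $(g_{ij})_{x^ky^l}y^i=0$. So the left-hand side of the lemma is identically zero. By Step 1, this in turn gives the relation
\[
\partial_{y^l}\big[(g_{ij})_{x^k}y^iy^jy^k\big]=2(g_{lj})_{x^k}y^jy^k+(g_{ij})_{x^l}y^iy^j .
\]

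\textbf{Step 3 (main obstacle).} To reach the stated form, I must show that the right-hand side $-(g_{lj})_{x^k}y^jy^k$ also vanishes. By Step 2, this is equivalent to the stated identity. I would try the Euler relation $(g_{lj})_{x^k}y^j=(F^2)_{x^ky^l}/2-(g_{ij})_{x^k y^l}y^iy^j/2$, which follows from $g_{lj}y^j=\tfrac12[F^2]_{y^l}$. Using Step 2, it reduces to $(g_{lj})_{x^k}y^jy^k=\tfrac12[F^2]_{x^ky^l}y^k$.

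Hence the lemma, as worded, is equivalent to $[F^2]_{x^ky^l}y^k=0$. I expect this to be the genuine difficulty. That condition is not automatic: it fails, for instance, for the Riemannian example above with $a_{ij}=\psi_{ij}$.

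So my plan establishes the identity exactly under the hypothesis $[F^2]_{x^ky^l}y^k=0$, where both sides vanish. That hypothesis holds, for example, when $F$ is locally Minkowskian. For a general Finsler metric, the computation in Steps 1--3 instead shows that the two sides differ by $\tfrac12[F^2]_{x^ky^l}y^k$. Any application of Lemma \ref{lem1} in the projective-flatness analysis would therefore need that term carried along.
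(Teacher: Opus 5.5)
Your computation is correct, and it shows that the lemma as stated is false. The paper's proof does not establish it.

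The paper's first step is wrong. Differentiating the Euler identity $(g_{ij})_{y^l}y^l=0$ with respect to $x^k$ gives $(g_{ij})_{x^ky^l}y^l=0$, not $-(g_{ij})_{x^k}$, because $\partial_{x^k}$ does not act on $y^l$. The extra term $-(g_{ij})_{y^k}$ appears only when one differentiates in $y^k$. The subsequent ``relabelling of dummy indices'' is also not a relabelling. It replaces an identity whose free index sits on the $x$-derivative with one whose free index sits on the $y$-derivative.

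The correct facts are exactly the ones you derive:
\begin{itemize}
\item $(g_{ij})_{x^ky^l}y^i=0$, so the left-hand side vanishes identically;
\item $(g_{lj})_{x^k}y^jy^k=\tfrac12[F^2]_{x^ky^l}y^k$.
\end{itemize}
The simplest counterexample is $F^2=e^{2x^1}|y|^2$. There the left-hand side is $0$ and the right-hand side is $-2e^{2x^1}y^1y^l$.

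You can sharpen your closing remark. The condition $[F^2]_{x^ky^l}y^k=0$ is not just one case in which the identity holds; it is equivalent to $x$-independence. Contract it with $y^l$ and use that $[F^2]_{x^k}$ is $2$-homogeneous in $y$; this gives $[F^2]_{x^k}y^k=0$. Differentiating that in $y^l$ then yields $[F^2]_{x^l}=0$. So the identity of the lemma holds exactly when $F$ is independent of $x$ in the given coordinates.

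As you anticipated, the error propagates.
\begin{itemize}
\item The reduced equation in Theorem~\ref{4.2} should carry $2(g_{lj})_{x^k}y^jy^k$.
\item The dual-flatness relation \eqref{dual} should read $(g_{lj})_{x^k}y^jy^k=(g_{ij})_{x^l}y^iy^j$, without the factor $2$.
\end{itemize}
These two errors happen to cancel, so \eqref{ratio} itself survives.

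A minor point: in Step~1, $\Phi_l$ should be $\Phi$, and Step~1 is not actually needed for your argument.
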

								
								\begin{proof}
									Since $g_{ij}$ is $0$-homogeneous in $y$,
									$(g_{ij})_{y^l}y^l=0.$
									Differentiating with respect to $x^k$ gives
									\[
									(g_{ij})_{x^k y^l}y^l=-(g_{ij})_{x^k}.
									\]
									Multiplying by $y^i y^j$ and using the symmetry of $g_{ij}$ together with a relabelling of dummy indices yields
									\[
									(g_{ij})_{x^k y^l}y^i y^j y^k
									=
									-(g_{lj})_{x^k}y^j y^k.
									\]
								\end{proof}
								\begin{theorem}\label{4.2} Let $F$ be  an almost rational Finsler metric, then $F$ is
									projectively flat 
									if it satisfies the following condition:
									
									\begin{equation}\label{eq:PF_AR_reduced}
									-\frac{F_{y^l}}{F}
									(\eta a_{ij})_{x^k}y^iy^jy^k
									+
									(\eta a_{lj})_{x^k}y^jy^k
									-
									(\eta a_{ij})_{x^l}y^iy^j
									=0.
									\end{equation}
									
								\end{theorem}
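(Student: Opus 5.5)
The plan is to start from Hamel's criterion \eqref{2.3e}, rewrite it entirely in terms of $F^2=g_{ij}y^iy^j$ with $g_{ij}=\eta a_{ij}$, and show that $2F$ times the left-hand side of \eqref{2.3e} equals the left-hand side of \eqref{eq:PF_AR_reduced}. Since $F>0$ on $TM_0$, the vanishing of \eqref{eq:PF_AR_reduced} then gives the vanishing of \eqref{2.3e}. In fact the same computation gives the converse, so the stated sufficiency is really an equivalence.

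The first step passes from $F$ to $F^2$. Since $F=\sqrt{F^2}$, we have $F_{x^k}=\frac{1}{2F}[F^2]_{x^k}$. Differentiating this in $y^l$ gives
\[
F_{x^ky^l}=\frac{1}{2F}[F^2]_{x^ky^l}-\frac{F_{y^l}}{2F^2}[F^2]_{x^k}.
\]
Contracting with $y^k$ and multiplying by $2F$, we obtain
\[
2F\big(F_{x^ky^l}y^k-F_{x^l}\big)=[F^2]_{x^ky^l}y^k-\frac{F_{y^l}}{F}[F^2]_{x^k}y^k-[F^2]_{x^l}.
\]

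The second step expresses each term through $g_{ij}=\eta a_{ij}$. Directly, $[F^2]_{x^k}=(g_{ij})_{x^k}y^iy^j$. Differentiating once more in $y^l$ and contracting with $y^k$ gives
\[
[F^2]_{x^ky^l}y^k=(g_{ij})_{x^ky^l}y^iy^jy^k+2(g_{lj})_{x^k}y^jy^k,
\]
using the symmetry of $g_{ij}$.

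This is where Lemma \ref{lem1} is needed. It replaces the first term on the right by $-(g_{lj})_{x^k}y^jy^k$, so that
\[
[F^2]_{x^ky^l}y^k=(g_{lj})_{x^k}y^jy^k .
\]
Substituting this back, the right-hand side of the first display becomes
\[
(g_{lj})_{x^k}y^jy^k-\frac{F_{y^l}}{F}(g_{ij})_{x^k}y^iy^jy^k-(g_{ij})_{x^l}y^iy^j .
\]
With $g_{ij}=\eta a_{ij}$, this is exactly the left-hand side of \eqref{eq:PF_AR_reduced}. Hence if \eqref{eq:PF_AR_reduced} holds, then $F_{x^ky^l}y^k-F_{x^l}=0$, and $F$ is projectively flat by Hamel's theorem.

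The main point needing care is the bookkeeping in the second step: the contraction of $(g_{ij})_{x^ky^l}$ with $y^iy^jy^k$, and the correct factor $2$ from differentiating the quadratic $y^iy^j$. I would check this using the $0$-homogeneity of $g_{ij}$ exactly as in the proof of Lemma \ref{lem1}, and verify that the relabelling of dummy indices there is legitimate. Everything else is routine differentiation.
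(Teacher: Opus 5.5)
The gap is the step you flagged yourself. Lemma~\ref{lem1} is false, so you cannot substitute $(g_{ij})_{x^ky^l}y^iy^jy^k=-(g_{lj})_{x^k}y^jy^k$.

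Here is why. The derivative $(g_{ij})_{y^l}=\frac12[F^2]_{y^iy^jy^l}$ is totally symmetric, and it is annihilated by $y^i$ (Euler's theorem for the $0$-homogeneous $[F^2]_{y^jy^l}$). Differentiating $(g_{ij})_{y^l}y^i=0$ in $x^k$ therefore gives $(g_{ij})_{x^ky^l}y^iy^jy^k=0$.

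The lemma's proof goes wrong in two places:
\begin{itemize}
\item differentiating $(g_{ij})_{y^l}y^l=0$ in $x^k$ yields $0$, not $-(g_{ij})_{x^k}$;
\item the final relabelling swaps the $x$-derivative index $k$ with the $y$-derivative index $l$, which are not interchangeable.
\end{itemize}
Any non-constant Riemannian $g_{ij}(x)$ is a counterexample to the lemma.

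Consequently $[F^2]_{x^ky^l}y^k=2(g_{lj})_{x^k}y^jy^k$, which also follows at once from $[F^2]_{y^l}=2g_{lj}y^j$. With this, your computation actually gives
\[
2F\big(F_{x^ky^l}y^k-F_{x^l}\big)=E_l+(g_{lj})_{x^k}y^jy^k,
\]
where $E_l$ denotes the left-hand side of \eqref{eq:PF_AR_reduced}. So $E_l=0$ does not yield Hamel's equation \eqref{2.3e} along your route.

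Your claimed converse is also false. Contracting with $y^l$ gives the identity $E_ly^l=-[F^2]_{x^k}y^k$ for every Finsler metric. The Beltrami--Klein metric of the paper's example is projectively flat, yet for it
\[
E_ly^l=-\frac{4(x\cdot y)\big((1-|x|^2)|y|^2+(x\cdot y)^2\big)}{(1-|x|^2)^3}\neq 0
\]
whenever $x\cdot y\neq0$. The paper's own proof runs in the converse direction (projective flatness $\Rightarrow$ \eqref{eq:PF_AR_reduced}). It relies on the same lemma and fails for the same reason.

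The stated implication can still be proved, but only for a degenerate reason your argument does not capture. Put $R:=(g_{ij})_{x^k}y^iy^jy^k=[F^2]_{x^k}y^k$.
\begin{itemize}
\item Contracting $E_l=0$ with $y^l$ and using $F_{y^l}y^l=F$ gives $-R=0$.
\item Then $0=\partial R/\partial y^l=[F^2]_{x^ky^l}y^k+[F^2]_{x^l}=2(g_{lj})_{x^k}y^jy^k+[F^2]_{x^l}$.
\item With $R=0$, the condition $E_l=0$ reads $(g_{lj})_{x^k}y^jy^k=[F^2]_{x^l}$.
\item Together these force $[F^2]_{x^l}=0$.
\end{itemize}
So hypothesis \eqref{eq:PF_AR_reduced} is equivalent to $F$ being independent of $x$ in the given coordinates, and then \eqref{2.3e} holds trivially. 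This proves the statement as written and also explains why no converse can hold. The correct AR form of Hamel's equation has coefficient $2$ in front of the middle term $(\eta a_{lj})_{x^k}y^jy^k$.
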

								
								\begin{proof}
									Since
									\[
									F^2=\eta a_{ij}y^iy^j,
									\]
									differentiation with respect to \(x^l\) yields
									\[
									2FF_{x^l}
									=
									(\eta a_{ij})_{x^l}y^iy^j.
									\]
									Hence
									\begin{equation}
									F_{x^l}
									=
									\frac{1}{2F}
									(\eta a_{ij})_{x^l}y^iy^j.
									\label{eq:Fxl}
									\end{equation}
									
									Differentiating \eqref{eq:Fxl} with respect to \(y^l\), we obtain
									\[
									F_{x^ky^l}
									=
									-\frac{F_{y^l}}{2F^2}
									(\eta a_{ij})_{x^k}y^iy^j
									+\frac{1}{2F}
									\frac{\partial}{\partial y^l}
									\Big(
									(\eta a_{ij})_{x^k}y^iy^j
									\Big).
									\]
									
									Since
									\[
									\frac{\partial}{\partial y^l}
									\Big(
									(\eta a_{ij})_{x^k}y^iy^j
									\Big)
									=
									(\eta a_{ij})_{x^ky^l}y^iy^j
									+
									2(\eta a_{lj})_{x^k}y^j,
									\]
									it follows that
									\[
									F_{x^ky^l}
									=
									\frac{1}{2F}
									\left[
									-\frac{F_{y^l}}{F}
									(\eta a_{ij})_{x^k}y^iy^j
									+
									(\eta a_{ij})_{x^ky^l}y^iy^j
									+
									2(\eta a_{lj})_{x^k}y^j
									\right].
									\]
									
									Multiplying by \(y^k\), we get
									\[
									F_{x^ky^l}y^k
									=
									\frac{1}{2F}
									\left[
									-\frac{F_{y^l}}{F}
									(\eta a_{ij})_{x^k}y^iy^jy^k
									+
									(\eta a_{ij})_{x^ky^l}y^iy^jy^k
									+
									2(\eta a_{lj})_{x^k}y^jy^k
									\right].
									\]
									
									Since \(F\) is projectively flat, \eqref{2.3e}
									holds. Substituting the above expressions and multiplying by \(2F\), we obtain
									\begin{equation}\label{26}
									-\frac{F_{y^l}}{F}
									(\eta a_{ij})_{x^k}y^iy^jy^k
									+
									(\eta a_{ij})_{x^ky^l}y^iy^jy^k
									+
									2(\eta a_{lj})_{x^k}y^jy^k
									-
									(\eta a_{ij})_{x^l}y^iy^j
									=0.   
									\end{equation}
									
									Now from Lemma \ref{lem1}, we have
									$(\eta a_{ij})_{x^ky^l}y^iy^jy^k
									=
									-(\eta a_{lj})_{x^k}y^jy^k.$
									Substituting this identity into \eqref{26} gives
									\[
									-\frac{F_{y^l}}{F}
									(\eta a_{ij})_{x^k}y^iy^jy^k
									+
									(\eta a_{lj})_{x^k}y^jy^k
									-
									(\eta a_{ij})_{x^l}y^iy^j
									=0.
									\]
									
									Hence \eqref{eq:PF_AR_reduced} follows.
								\end{proof}
								
								\begin{theorem}
									Let $F$ be an almost rational Finsler metric on a manifold $M$. Suppose that $F$ is projectively flat and satisfies
									\[
									(g_{ij})_{x^k}y^iy^jy^k=0,
									\qquad
									\forall (x,y)\in TM\setminus\{0\}.
									\]
									Then $(g_{ij})_{x^k}=0,$ and consequently $F$ is locally Minkowskian.
								\end{theorem}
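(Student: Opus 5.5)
The plan is to avoid the AR-decomposition altogether and work directly with Hamel's equation \eqref{2.3e}. The structure $g_{ij}=\eta a_{ij}$ should only matter through the identity $F^2=g_{ij}y^iy^j$. I would first turn the hypothesis into a statement about $F$ itself, then combine it with projective flatness to conclude that $F$ does not depend on $x$. After that, $(g_{ij})_{x^k}=0$ follows directly.

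\textbf{Step 1: rewrite the hypothesis in terms of $F$.} As in the proof of the Proposition on $m$-th root metrics, differentiating $F^2=g_{ij}y^iy^j$ in $x^k$ and contracting with $y^k$ gives
\[
(F^2)_{x^k}y^k=(g_{ij})_{x^k}y^iy^jy^k=0 .
\]
Since $F>0$ on $TM\setminus\{0\}$, this is equivalent to $F_{x^k}y^k=0$ identically on $TM\setminus\{0\}$.

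\textbf{Step 2: differentiate this identity in $y^l$.} This gives
\[
F_{x^ky^l}y^k+F_{x^l}=0 .
\]
Projective flatness, in Hamel's form \eqref{2.3e}, says $F_{x^ky^l}y^k-F_{x^l}=0$. Subtracting the two equations yields $2F_{x^l}=0$ for every $l$. Hence $F=F(y)$ in the given coordinates, and $F$ is locally Minkowskian.

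\textbf{Step 3: conclude $(g_{ij})_{x^k}=0$.} We have
\[
g_{ij}=\tfrac12 (F^2)_{y^iy^j},
\]
and $x$-derivatives commute with $y$-derivatives. Therefore $(g_{ij})_{x^k}=\tfrac12 (F^2_{x^k})_{y^iy^j}=0$.

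As a consistency check against Theorem \ref{4.2}: once the hypothesis holds, the first term of \eqref{eq:PF_AR_reduced} vanishes, and the remaining two terms vanish because $F_{x^l}=0$.

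\textbf{Main obstacle.} The only delicate point is the coordinate choice. Hamel's equation \eqref{2.3e} holds only in the projectively flat (adapted) coordinates. The hypothesis $(g_{ij})_{x^k}y^iy^jy^k=0$ is not tensorial, so it must be read in those same coordinates. I would state this explicitly, interpreting both conditions in a fixed adapted chart. Beyond that, the argument is a two-line computation. It needs neither the rationality of $a_{ij}$ nor the polynomial-coefficient argument used for $m$-th root metrics.
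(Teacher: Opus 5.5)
Your proof is correct, and it takes a genuinely different route from the paper's.

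\textbf{The paper's route.} The paper feeds the hypothesis into the AR form of Hamel's equation (Theorem~\ref{4.2}) and obtains $(g_{\ell j})_{x^k}y^jy^k=(g_{ij})_{x^\ell}y^iy^j$. It then symmetrizes this identity and the cubic hypothesis as though they were polynomial identities with $y$-independent coefficients. Solving the resulting linear system ($A+B=2C$ and its cyclic permutations, plus $A+B+C=0$) gives $(g_{ij})_{x^k}=0$ first, and $F_{x^k}=0$ only at the end.

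\textbf{Your route.} You reverse the order. You read the hypothesis as the scalar identity $F_{x^k}y^k=0$, which says the projective factor $P=F_{x^k}y^k/(2F)$ vanishes. One $y^l$-derivative minus Hamel's equation gives $F_{x^l}=0$, and $(g_{ij})_{x^k}=0$ then follows by commuting derivatives.

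\textbf{What each approach buys.} Your argument uses nothing about the AR structure and applies to every Finsler metric. It also avoids two weak points in the paper's chain.
\begin{enumerate}
\item Passing from $T_{ijk}(x,y)y^iy^jy^k=0$ to vanishing symmetrized coefficients is valid only when $T_{ijk}=(g_{ij})_{x^k}$ does not depend on $y$. That is essentially the Riemannian case.
\item Lemma~\ref{lem1}, on which Theorem~\ref{4.2} rests, is not correct as stated. Total symmetry of $(g_{ij})_{y^l}$ and $0$-homogeneity give $(g_{ij})_{y^l}y^i=0$. Hence $(g_{ij})_{x^ky^l}y^iy^jy^k=0$, and the middle term of \eqref{eq:PF_AR_reduced} should carry a factor $2$. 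Already $F=c(x)|y|$ in dimension one violates the stated version. Your consistency check is unaffected, since every term vanishes once $F_{x^l}=0$.
\end{enumerate}
The paper's approach does stay within the $g_{ij}$/AR tensor formalism used throughout Section~4. For this statement, though, your direct argument is both shorter and more reliable.

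Your remark that the non-tensorial hypothesis must be read in the projectively flat chart is apt. The paper makes the same assumption tacitly.
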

								
								\begin{proof}
									Since $F$ is projectively flat, Theorem \ref{4.2} yields
									\begin{equation}
									-\frac{F_{y^\ell}}{F}(g_{ij})_{x^k}y^iy^jy^k
									+(g_{\ell j})_{x^k}y^jy^k
									-(g_{ij})_{x^\ell}y^iy^j
									=0.
									\label{eq:pf}
									\end{equation}
									
									By hypothesis, $(g_{ij})_{x^k}y^iy^jy^k=0,$
									and therefore the first term of \eqref{eq:pf} vanishes identically. Hence
									\begin{equation}
									(g_{\ell j})_{x^k}y^jy^k
									-(g_{ij})_{x^\ell}y^iy^j
									=0.
									\label{eq:qform}
									\end{equation}
									From (29), we have
									\[
									\Big((g_{\ell j})_{x^k}-(g_{jk})_{x^\ell}\Big)y^\ell y^j y^k=0
									\]
									for all \(y\). Since \(y^\ell y^j y^k\) is completely symmetric, only the symmetric part of the coefficient contributes. Hence
									\[
									\Big((g_{\ell j})_{x^k}-(g_{jk})_{x^\ell}\Big)_{(\ell jk)}=0.
									\]
									Expanding the symmetrization and using the symmetry \(g_{ij}=g_{ji}\), we obtain
									\[
									\frac13\Big((g_{\ell j})_{x^k}
									+(g_{\ell k})_{x^j}
									-2(g_{jk})_{x^\ell}\Big)=0.
									\]
									Therefore,
									\[
									(g_{\ell j})_{x^k}
									+(g_{\ell k})_{x^j}
									=
									2(g_{jk})_{x^\ell},
									\]
									which is precisely (30).
									
									Since \eqref{eq:qform}
									holds for all \(y\), the symmetric part of the coefficient tensor must vanish. Therefore,
									\[
									\bigl((g_{\ell j})_{x^k}-(g_{jk})_{x^\ell}\bigr)
									+
									\bigl((g_{\ell k})_{x^j}-(g_{kj})_{x^\ell}\bigr)=0.
									\]

									Using the symmetry \(g_{jk}=g_{kj}\) and above result we obtain,
									\begin{equation}
									(g_{\ell j})_{x^k}
									+
									(g_{\ell k})_{x^j}
									=
									2(g_{jk})_{x^\ell}.
									\label{eq:A}
									\end{equation}
									
									Next, define
									\[
									T_{ijk}:=(g_{ij})_{x^k}.
									\]
									Since $g_{ij}=g_{ji}$, the tensor $T_{ijk}$ is symmetric in the first two indices. The condition
									\[
									T_{ijk}y^iy^jy^k=0
									\]
									for all $y$ implies that the complete symmetrization of $T_{ijk}$ vanishes. Thus
									\begin{equation}
									(g_{ij})_{x^k}
									+
									(g_{jk})_{x^i}
									+
									(g_{ki})_{x^j}
									=
									0.
									\label{eq:B}
									\end{equation}
									
									Now writing \eqref{eq:A} and its cyclic permutations, we obtain
									\begin{align}
										(g_{ij})_{x^k}
										+
										(g_{ik})_{x^j}
										&=
										2(g_{jk})_{x^i},
										\label{eq:C1}
										\\
										(g_{jk})_{x^i}
										+
										(g_{ji})_{x^k}
										&=
										2(g_{ik})_{x^j},
										\label{eq:C2}
										\\
										(g_{ki})_{x^j}
										+
										(g_{kj})_{x^i}
										&=
										2(g_{ij})_{x^k}.
										\label{eq:C3}
									\end{align}
									
									Let
									\[
									A=(g_{ij})_{x^k},
									\qquad
									B=(g_{ik})_{x^j},
									\qquad
									C=(g_{jk})_{x^i}.
									\]
									Then \eqref{eq:C1}--\eqref{eq:C3} become
									\[
									A+B=2C,
									\qquad
									C+A=2B,
									\qquad
									B+C=2A.
									\]
									Which yields,
									\[
									A=B=C,
									\]
									that is,
									\begin{equation}
									(g_{ij})_{x^k}
									=
									(g_{ik})_{x^j}
									=
									(g_{jk})_{x^i}.
									\label{eq:D}
									\end{equation}
									
									Substituting \eqref{eq:D} into \eqref{eq:B}, we obtain
									
									\[
									(g_{ij})_{x^k}=0.
									\]
									
									Finally, differentiating
									\[
									F^2=g_{ij}y^iy^j
									\]
									with respect to $x^k$, we get
									\[
									2FF_{x^k}
									=
									(g_{ij})_{x^k}y^iy^j.
									\]
									Since $(g_{ij})_{x^k}=0$, it follows that $F_{x^k}=0.$
									
									Therefore $F$ is independent of the base coordinates in a neighborhood, and consequently $F$ is locally Minkowskian.
								\end{proof}
								\begin{example}
									\textnormal{
										Let \(F=A^{1/m}\) (\(m\ge3\)) be an \(m\)-th root Finsler metric.
										Suppose that \(F\) is locally projectively flat and satisfies $(\eta a_{ij})_{x^k}y^iy^jy^k=0,$ Then \(F\) is locally Minkowskian.}    
									\end{example}
									
									\begin{example}
										\textnormal{Let \(\mathbb{B}^{n}=\{x\in\mathbb{R}^{n}:|x|<1\}\) be the unit ball and consider the Finsler metric
											\begin{equation*}
												F(x,y)
												=
												\sqrt{
													\frac{(1-|x|^{2})|y|^{2}+(x\cdot y)^{2}}
													{(1-|x|^{2})^{2}}}    
												\end{equation*}
												Then \(F\) is an AR-Finsler metric which is projectively flat but neither locally dually flat nor locally Minkowskian.}
											\end{example}
											
											Writing $F^{2}=g_{ij}(x)y^{i}y^{j},$
											we obtain
											\begin{equation}
											g_{ij}(x)
											=
											\frac{(1-|x|^{2})\delta_{ij}+x_{i}x_{j}}
											{(1-|x|^{2})^{2}}.    
											\end{equation}
											
											Since \(g_{ij}\) is independent of \(y\), we may write
											\[
											g_{ij}(x,y)=\eta(x,y)a_{ij}(x,y),
											\qquad
											\eta\equiv 1,
											\]
											with
											\[
											a_{ij}(x,y)
											=
											\frac{(1-|x|^{2})\delta_{ij}+x_{i}x_{j}}
											{(1-|x|^{2})^{2}}.
											\]
											Hence \(a_{ij}\) is rational in the fiber coordinates \(y\) (indeed independent of \(y\)). Therefore \(F\) is an AR-Finsler metric.
											
											\begin{enumerate}
												\item[1.] \textbf{(\(F\) is not locally dually flat.)} Let
												\[
												A=(1-|x|^{2})|y|^{2}+(x\cdot y)^{2}.
												\]
												Then
												\[
												F^{2}=\frac{A}{(1-|x|^{2})^{2}}.
												\]
												A direct computation yields
												\[
												(F^{2})_{x^{k}y^{l}}y^{k}-2(F^{2})_{x^{l}}
												=
												\frac{-2|y|^{2}x_{l}
													+8(x\cdot y)y_{l}}
													{(1-|x|^{2})^{2}}.
													\]
													Choose
													\[
													x=\left(\frac12,0,\ldots,0\right),
													\qquad
													y=(0,1,0,\ldots,0).
													\]
													Then
													\[
													x\cdot y=0,
													\qquad
													|y|^{2}=1,
													\]
													and therefore
													\[
													(F^{2})_{x^{k}y^{l}}y^{k}-2(F^{2})_{x^{1}}
													=
													-\frac{1}{(1-|x|^{2})^{2}}
													\neq 0.
													\]
													Hence the dual-flatness equation fails and \(F\) is not locally dually flat.
													
													\item[2.] \textbf{(\(F\) is locally projectively flat.)} The metric tensor
													\begin{equation}\label{eq1}
													g_{ij}
													=
													\frac{(1-|x|^{2})\delta_{ij}+x_{i}x_{j}}
													{(1-|x|^{2})^{2}}   
													\end{equation}
													is precisely the metric tensor of the Beltrami--Klein model of the hyperbolic space. It is well known that the geodesics of the Beltrami--Klein metric are Euclidean straight line segments. Therefore, in the coordinates $(x^{1},\ldots,x^{n}),$
													all geodesics are straight lines. By definition, \(F\) is locally projectively flat.
													
													\item[3.] \textbf{(\(F\) is not locally Minkowskian.)} A locally Minkowskian metric is independent of the base coordinates \(x\). However, $g_{ij}(x)$ given by \eqref{eq1} depends explicitly on \(x\), and hence \(F\) is not locally Minkowskian.
													
												\end{enumerate}

												\begin{theorem}
													Let $F$ be an almost rational Finsler metric on a smooth manifold $M$. Suppose that $F$ is both locally dually flat and projectively flat then, for every $l=1,\ldots,n$, $F$ satisfies the following equation:
													\begin{equation}\label{ratio}
													\frac{F_{y^l}}{F}
													=\frac{(g_{ij})_{x^l}y^iy^j}
													{(g_{ij})_{x^k}y^iy^jy^k},
													\end{equation}
												\end{theorem}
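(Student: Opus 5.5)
The plan is to avoid working with the AR-decomposition directly. I will combine the two defining PDEs, \eqref{eq22} for dual flatness and \eqref{2.3e} for projective flatness, at the level of $F$, and only at the end rewrite the result in terms of $g_{ij}=\eta a_{ij}$. The key observation is that the dual-flatness equation involves $F^2$, while Hamel's equation involves $F$. Expanding the former by the product rule should therefore produce the latter plus one extra term.

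First I expand $(F^2)_{x^k}=2FF_{x^k}$ and differentiate in $y^l$:
\[
(F^2)_{x^ky^l}y^k = 2F\,F_{x^ky^l}y^k + 2F_{y^l}\,F_{x^k}y^k .
\]
By projective flatness \eqref{2.3e}, $F_{x^ky^l}y^k=F_{x^l}$, so the right-hand side becomes $2FF_{x^l}+2F_{y^l}F_{x^k}y^k$. Dual flatness \eqref{eq22} says the left-hand side equals $2(F^2)_{x^l}=4FF_{x^l}$. Comparing the two expressions gives the scalar identity
\[
F_{y^l}\,F_{x^k}y^k = F\,F_{x^l},\qquad l=1,\dots,n .
\]

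Next I translate this identity into the fundamental tensor, exactly as in the proof of Theorem \ref{4.2}. From $F^2=g_{ij}y^iy^j$ we get $2FF_{x^l}=(g_{ij})_{x^l}y^iy^j$. Contracting with $y^k$ then gives $2FF_{x^k}y^k=(g_{ij})_{x^k}y^iy^jy^k$. Multiplying the scalar identity by $2F$ and substituting both formulas yields
\[
F_{y^l}\,(g_{ij})_{x^k}y^iy^jy^k = F\,(g_{ij})_{x^l}y^iy^j .
\]
Dividing by $F\,(g_{ij})_{x^k}y^iy^jy^k$ gives \eqref{ratio}. The AR-structure enters only through $g_{ij}=\eta a_{ij}$; if desired, the numerator and denominator can be written as $(\eta a_{ij})_{x^l}y^iy^j$ and $(\eta a_{ij})_{x^k}y^iy^jy^k$. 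As a consistency check, the identity should agree with \eqref{eq:PF_AR_reduced} once Theorem \ref{thm:dually-flat-AR-equivalence} is used.

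The only genuine obstacle is the division: the quotient in \eqref{ratio} makes sense only where $(g_{ij})_{x^k}y^iy^jy^k\neq 0$, and I will state that \eqref{ratio} is meant on that open set. Everywhere else, the cross-multiplied identity above is the correct statement. Moreover, if $(g_{ij})_{x^k}y^iy^jy^k$ vanishes identically, the preceding theorem (projective flatness together with $(g_{ij})_{x^k}y^iy^jy^k=0$) already shows that $F$ is locally Minkowskian. In that degenerate case both sides of the cross-multiplied relation vanish, so nothing is lost.
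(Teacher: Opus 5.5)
Your proof is correct, and it takes a different route from the paper.

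\textbf{The two routes.} The paper first rewrites each PDE in terms of $g_{ij}$. Hamel's equation becomes \eqref{pf} via Theorem~\ref{4.2}, and the dual-flatness equation becomes \eqref{dual}. In both, Lemma~\ref{lem1} is used to eliminate the term $(g_{ij})_{x^ky^l}y^iy^jy^k$. Then \eqref{dual} is substituted into \eqref{pf}.

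You instead combine the two PDEs at the level of $F$ through the product-rule identity
\[
(F^2)_{x^ky^l}y^k-2(F^2)_{x^l}=2F\bigl(F_{x^ky^l}y^k-F_{x^l}\bigr)+2\bigl(F_{y^l}F_{x^k}y^k-F\,F_{x^l}\bigr).
\]
You convert to $g_{ij}$ only at the end, using $2FF_{x^l}=(g_{ij})_{x^l}y^iy^j$. This buys three things. It is shorter. It makes clear that the AR structure plays no role. It also gives a converse for free: for a projectively flat $F$, your cross-multiplied relation is \emph{equivalent} to local dual flatness.

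\textbf{Why your route is also more robust.} Lemma~\ref{lem1} is false. Since $(g_{ij})_{y^l}=\frac12[F^2]_{y^iy^jy^l}$ is totally symmetric and $0$-homogeneous, $(g_{ij})_{y^l}y^i=0$. Hence $(g_{ij})_{x^ky^l}y^iy^jy^k=0$.

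For instance, take $g_{ij}=(1+|x|^2)\delta_{ij}$. The left-hand side of the lemma vanishes, while its right-hand side is $-2(x\cdot y)y^l$, which is not identically zero. The slip is that differentiating $(g_{ij})_{y^l}y^l=0$ in $x^k$ gives $0$, not $-(g_{ij})_{x^k}$.

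The correct versions of the two intermediate equations are therefore
\[
-\frac{F_{y^l}}{F}(g_{ij})_{x^k}y^iy^jy^k+2(g_{lj})_{x^k}y^jy^k-(g_{ij})_{x^l}y^iy^j=0,
\qquad
(g_{lj})_{x^k}y^jy^k=(g_{ij})_{x^l}y^iy^j .
\]
Both \eqref{pf} and \eqref{dual} are off by the same term $(g_{lj})_{x^k}y^jy^k$, and it cancels on substitution. That is why the paper's final identity is still right. Your computation performs this cancellation automatically, since the term $(g_{ij})_{x^ky^l}y^iy^jy^k$ never appears.

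\textbf{Two small edits.}
\begin{enumerate}
\item Drop the ``consistency check'' against \eqref{eq:PF_AR_reduced}. That equation inherits the error: the correct coefficient of $(g_{lj})_{x^k}y^jy^k$ is $2$, not $1$.
\item In the degenerate case you do not need the preceding theorem. Your identity $F_{y^l}(g_{ij})_{x^k}y^iy^jy^k=F(g_{ij})_{x^l}y^iy^j$ already gives $2FF_{x^l}=(g_{ij})_{x^l}y^iy^j=0$ wherever the denominator vanishes. So $F_{x^l}=0$ there directly.
\end{enumerate}
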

												\begin{proof}
													Let $g_{ij}:=\eta a_{ij}.$ Since $F$ is projectively flat, Theorem 4.2 yields
													
													\begin{equation}\label{pf}
													-\frac{F_{y^l}}{F}
													(g_{ij})_{x^k}y^iy^jy^k
													+
													(g_{lj})_{x^k}y^jy^k
													-(g_{ij})_{x^l}y^iy^j= 0.
													\end{equation}
													
													Since $F$ is locally dually flat, Theorem 3.1 implies 
													
													\begin{equation}
													(F^2)_{x^ky^l}y^k
													=2(F^2)_{x^l}.    
													\end{equation}

													Using $F^2=g_{ij}y^iy^j,$ together with Lemma \ref{lem1}, one obtains
													
													\begin{equation}\label{dual}
													(g_{lj})_{x^k}y^jy^k
													=2(g_{ij})_{x^l}y^iy^j.
													\end{equation}
													
													Substituting \eqref{dual} into \eqref{pf} gives $-\frac{F_{y^l}}{F}
													(g_{ij})_{x^k}y^iy^jy^k
													+
													(g_{ij})_{x^l}y^iy^j
													=0.$
													Hence, the theorem follows, whenever the denominator is nonzero.
												\end{proof}
												
												\begin{corollary}
													Let $F$ be an almost rational Finsler metric on a smooth manifold $M$. If $F$ is both projectively flat and locally dually flat and satisfies $(\eta a_{ij})_{x^k}y^iy^jy^k=0,$ then $F$ is locally Minkowskian.
												\end{corollary}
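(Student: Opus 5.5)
The plan is to combine the two identities recorded in the proof of the preceding theorem. I will avoid any coefficient-extraction argument: $g_{ij}=\eta a_{ij}$ depends on $y$, so "vanishing for all $y$ forces the coefficients to vanish" is not directly available. Instead I will show directly that $(F^2)_{x^l}=0$.

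Set $g_{ij}=\eta a_{ij}$, so that $F^2=g_{ij}y^iy^j$. Since $F$ is projectively flat, Theorem~\ref{4.2} gives \eqref{pf}. The hypothesis $(g_{ij})_{x^k}y^iy^jy^k=0$ kills the first term of \eqref{pf}, leaving
\[
(g_{lj})_{x^k}y^jy^k=(g_{ij})_{x^l}y^iy^j .
\]
Since $F$ is also locally dually flat, the dual-flatness equation \eqref{eq22}, rewritten using Lemma~\ref{lem1}, gives the identity \eqref{dual}:
\[
(g_{lj})_{x^k}y^jy^k=2(g_{ij})_{x^l}y^iy^j .
\]

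Subtracting the two displays yields $(g_{ij})_{x^l}y^iy^j=0$ for every $l$ and all $(x,y)\in TM\setminus\{0\}$. But $(g_{ij})_{x^l}y^iy^j=(F^2)_{x^l}$, because differentiating $F^2=g_{ij}y^iy^j$ in $x^l$ only hits $g_{ij}$. Hence $2FF_{x^l}=0$, and since $F>0$ on $TM_0$ we get $F_{x^l}=0$. Thus, in the given local coordinates, $F=F(y)$ is locally Minkowskian.

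Alternatively, the statement follows at once from the previous theorem, which needs only projective flatness together with the same hypothesis. I prefer the route above because it uses dual flatness and is insensitive to the $y$-dependence of $g_{ij}$.

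The only step needing care is the derivation of \eqref{dual} from \eqref{eq22}, and I will recheck it. Writing $(F^2)_{x^k}=(g_{ij})_{x^k}y^iy^j$, differentiating in $y^l$ and contracting with $y^k$ gives
\[
(F^2)_{x^ky^l}y^k=(g_{ij})_{x^ky^l}y^iy^jy^k+2(g_{lj})_{x^k}y^jy^k .
\]
Lemma~\ref{lem1} turns the first term into $-(g_{lj})_{x^k}y^jy^k$, so $(F^2)_{x^ky^l}y^k=(g_{lj})_{x^k}y^jy^k$. Comparing with $2(F^2)_{x^l}=2(g_{ij})_{x^l}y^iy^j$ then gives \eqref{dual}.

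The step I expect to need the most care is the proof of Lemma~\ref{lem1} itself. Its "relabelling" must correctly convert $-(g_{ij})_{x^k}y^iy^j$, after the index swap $l\leftrightarrow k$, into the stated form. If that fails, I will fall back on the previous theorem, which delivers the corollary without using dual flatness.
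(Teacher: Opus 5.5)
The paper gives no separate proof of this corollary. Your strategy is the one it implicitly suggests: take the last display in the proof of the theorem giving \eqref{ratio} and drop the cubic term. But the step you flagged and left unchecked is exactly where it breaks. Lemma~\ref{lem1} is false, so \eqref{pf} and \eqref{dual} both carry wrong coefficients.

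By Euler's theorem $(F^2)_{y^l}=2g_{lj}y^j$. Differentiating in $x^k$ gives $(F^2)_{x^ky^l}=2(g_{lj})_{x^k}y^j$. Comparing this with the expansion of $\partial_{y^l}\bigl((g_{ij})_{x^k}y^iy^j\bigr)$ shows that $(g_{ij})_{x^ky^l}y^iy^j=0$ identically, even before contracting with $y^k$. For $g_{ij}=e^{2x^1}\delta_{ij}$ the lemma's right-hand side is $-2e^{2x^1}y^1y^l\neq0$, so the lemma fails.

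Its proof already fails at the first step. Differentiating $(g_{ij})_{y^l}y^l=0$ in $x^k$ gives $(g_{ij})_{x^ky^l}y^l=0$, not $-(g_{ij})_{x^k}$. No relabelling can then turn the contracted index $l$ into a free one.

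Consequently your recheck should read $(F^2)_{x^ky^l}y^k=2(g_{lj})_{x^k}y^jy^k$. Dual flatness therefore says $(g_{lj})_{x^k}y^jy^k=(g_{ij})_{x^l}y^iy^j$, and Hamel's equation \eqref{2.3e} becomes
\[
-\frac{F_{y^l}}{F}(g_{ij})_{x^k}y^iy^jy^k+2(g_{lj})_{x^k}y^jy^k-(g_{ij})_{x^l}y^iy^j=0 .
\]
As a check, the paper's \eqref{dual} would force the dually flat Hessian metric $F^2=\psi_{x^ix^j}(x)y^iy^j$ of Section~3 to have constant coefficients. So the two equations you subtract are not consequences of the hypotheses. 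That the wrong coefficients still lead to the right conclusion is an accident.

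Your fallback does not rescue the argument. The proof of the theorem you would fall back on runs through Theorem~\ref{4.2}, hence through the same lemma. It then extracts coefficients from the $y$-dependent $(g_{ij})_{x^k}$, which is precisely the move you rightly refused to make.

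The repair is short. Put $X:=(g_{lj})_{x^k}y^jy^k$ and $Y:=(g_{ij})_{x^l}y^iy^j=(F^2)_{x^l}$. The hypothesis reduces the corrected projective-flatness equation to $2X=Y$, and dual flatness gives $X=Y$. Hence $X=Y=0$, and your final step $2FF_{x^l}=0$ goes through unchanged.

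There is a simpler route that needs no lemma at all. The hypothesis says exactly $(F^2)_{x^k}y^k\equiv0$, equivalently $F_{x^k}y^k\equiv0$.
\begin{itemize}
\item Differentiating $(F^2)_{x^k}y^k\equiv0$ in $y^l$ gives $(F^2)_{x^ky^l}y^k=-(F^2)_{x^l}$, so \eqref{eq22} yields $3(F^2)_{x^l}=0$.
\item Differentiating $F_{x^k}y^k\equiv0$ in $y^l$ gives $F_{x^ky^l}y^k=-F_{x^l}$, so \eqref{2.3e} yields $2F_{x^l}=0$.
\end{itemize}
So either flatness condition alone already makes $F$ locally Minkowskian, provided the hypothesis is imposed in a chart adapted to that structure. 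This also avoids needing one chart adapted to both structures at once. It shows as well that the theorem you wanted to fall back on is true as stated, even though its written proof is not.
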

												\begin{proposition}
													Let $F$ be a locally dually flat and projectively flat AR Finsler metric on a smooth manifold $M$, ($\dim~M\ge 2$) such that $\eta(x,y)=\eta(x)$ and $a_{ij}(x,y)=a_{ij}(y)$ then $F$ cannot be a regular strongly convex Finsler metric.
												\end{proposition}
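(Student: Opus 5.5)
The plan is to write everything in terms of the two scalar functions $\eta(x)$ and $A(y):=a_{ij}(y)y^iy^j$. Then $F^2=\eta(x)A(y)$, and $A$ is positively homogeneous of degree $2$ in $y$. Both flatness conditions should collapse to a single first-order identity, and evaluating that identity on a well-chosen direction $y$ should contradict the positivity of $F$. I will work with the defining equations \eqref{eq22} and \eqref{2.3e} directly rather than with Theorems \ref{thm:dually-flat-AR-equivalence} and \ref{4.2}, since the separated form makes them explicit.

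\emph{Step 1 (dual flatness).} Since $(F^2)_{x^l}=\eta_{x^l}A$ and $(F^2)_{x^ky^l}y^k=\eta_{x^k}y^kA_{y^l}$, equation \eqref{eq22} becomes
\[
\eta_{x^k}y^k\,A_{y^l}=2\,\eta_{x^l}\,A,\qquad l=1,\dots,n .
\]

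\emph{Step 2 (projective flatness).} Write $F=\sqrt{\eta}\,\sqrt{A}$. Then $F_{x^l}=\frac{\eta_{x^l}}{2\sqrt\eta}\sqrt A$ and $F_{x^ky^l}y^k=\frac{\eta_{x^k}y^k}{2\sqrt\eta}\,\frac{A_{y^l}}{2\sqrt A}$. Substituting into \eqref{2.3e} and clearing the factor $4\sqrt{\eta A}$ gives exactly the same identity as in Step 1. So under these hypotheses the two conditions are equivalent, and one identity carries all the information.

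\emph{Step 3 (the contradiction).} A degenerate case has to be set aside first. If $d\eta\equiv0$ on an open set, then $F=c\sqrt{A(y)}$ there, which is locally Minkowskian and trivially regular. The statement must therefore be read for the genuine case where $\eta$ is nonconstant, i.e.\ $F$ is not locally Minkowskian; I will make this explicit, as the Conjecture suggests. So fix a point $x_0$ with $v:=d\eta(x_0)\neq0$. Because $\dim M\ge2$, there is a nonzero $y\in T_{x_0}M$ with $v_ky^k=0$. For this $y$ the left side of the identity vanishes, so $2v_lA(y)=0$ for all $l$. Choosing $l$ with $v_l\ne0$ gives $A(y)=0$, hence $F(x_0,y)=0$ with $y\neq0$. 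This contradicts positivity, and hence strong convexity, of a regular Finsler metric.

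As a consistency check, the identity on the set $\{v\cdot y\neq0\}$ integrates to $A=c\,(v\cdot y)^2$. Its Hessian $c\,v v^{T}$ has rank one, so it is degenerate for $n\ge2$, which again contradicts condition (iii) of the definition.

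I expect the computations to be routine. The real obstacle is the degenerate case $d\eta\equiv0$: the proposition is false as literally stated there, so the proof must explicitly exclude the locally Minkowskian situation. It must also ensure the argument is applied at a point where $d\eta\neq0$, which exists whenever $\eta$ is not locally constant.
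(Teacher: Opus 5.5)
Your proof is correct, and it reaches the conclusion by a different route from the paper.

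The paper specializes the ratio identity \eqref{ratio} to $F_{y^l}/F=\eta_{x^l}/(\eta_{x^k}y^k)$ and works on the region where $\eta_{x^k}y^k\neq 0$. There it integrates to $F=\phi(x)\,\eta_{x^k}y^k$ and observes that $g_{ij}=\phi^2\eta_{x^i}\eta_{x^j}$ has rank at most one; this is essentially your ``consistency check''. You instead substitute $F^2=\eta(x)A(y)$ directly into \eqref{eq22} and \eqref{2.3e}. This shows that, in this setting, the two flatness conditions are the same identity, so either hypothesis alone suffices. You then evaluate that identity on the hyperplane $\eta_{x^k}(x_0)y^k=0$, which is exactly where the paper's denominator vanishes. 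This forces $A(y)=0$ at once, with no integration step.

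Working from the defining equations also makes your argument independent of Lemma~\ref{lem1} and Theorem~\ref{4.2}, which is a real advantage. The left-hand side in Lemma~\ref{lem1} vanishes for every Finsler metric (differentiate $(g_{ij})_{y^l}y^i=0$ in $x^k$), so the lemma fails in general, already for Riemannian metrics. The identity \eqref{ratio} comes out right only because the same wrong coefficient of $(g_{lj})_{x^k}y^jy^k$ enters both the projective-flatness and the dual-flatness identities and cancels upon substitution.

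Your caveat about the degenerate case is also correct. Taking $\eta\equiv 1$ and $a_{ij}=\delta_{ij}$ gives the Euclidean metric, a counterexample to the literal statement. The paper excludes this only tacitly, through its opening assumption ``$\eta_{x^k}y^k\neq 0$ on the domain under consideration''. Read as holding for all $y\neq 0$, that assumption can never hold when $n\ge 2$, by the very observation your proof rests on.

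What you actually prove is a sharper fact. When $\eta=\eta(x)$ and $a_{ij}=a_{ij}(y)$, $F$ is locally dually flat, equivalently locally projectively flat, if and only if $\eta$ is locally constant, i.e.\ $F$ is locally Minkowskian.
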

												
												\begin{proof}
Assume $\eta_{x^k}y^k\neq 0$ on the domain under consideration.	Since $a_{ij}$ depends only on $y$, $(a_{ij})_{x^l}=0, ~~ \forall l$, from \eqref{ratio} we obtain 
\begin{align*}
	\frac{F_{y^l}}{F} &= \frac{(g_{ij})_{x^l} y^i y^j}{(g_{ij})_{x^k} y^i y^j y^k} = \frac{(\eta a_{ij})_{x^l} y^i y^j}{(\eta a_{ij})_{x^k} y^i y^j y^k} = \frac{(\eta_{x^l}) a_{ij} y^i y^j + \eta (a_{ij})_{x^l} y^i y^j}{(\eta_{x^k}) a_{ij} y^i y^j y^k + \eta (a_{ij})_{x^k} y^i y^j y^k} \\[2ex]
	\intertext{Since $a_{ij}(x,y) = a_{ij}(y) \implies (a_{ij})_{x^l} = 0$ and $(a_{ij})_{x^k} = 0$:}
	\therefore \frac{F_{y^l}}{F} &= \frac{(\eta)_{x^l} a_{ij} y^i y^j}{(\eta)_{x^k} a_{ij} y^i y^j y^k} = \frac{(\eta_{x^l}) (a_{ij} y^i y^j)}{(\eta)_{x^k} (a_{ij} y^i y^j) y^k}= \frac{(\eta_{x^l})}{(\eta)_{x^k} y^k}
\end{align*}
													The above equation can be rewritten as
													\begin{equation}
													(\ln F)_{y^l}
													=
													\frac{\eta_{x^l}}
													{\eta_{x^k}y^k}.
													\end{equation}
													
													Since $\frac{\partial}{\partial y^l}
													\ln(\eta_{x^k}y^k)= \frac{\eta_{x^l}}
													{\eta_{x^k}y^k},$ it follows that
													
													\begin{equation}
													F(x,y)
													=
													\phi(x)\,\eta_{x^k}(x)y^k,
													\end{equation}
													
													Now
													\begin{equation}
													F^2
													=
													\phi(x)^2\bigl(\eta_{x^k}y^k\bigr)^2.
													\end{equation}
													
													Differentiating twice with respect to $y^i$ and $y^j$, we obtain
													\begin{equation}
													g_{ij}
													=
													\frac12(F^2)_{y^iy^j}
													=
													\phi(x)^2\eta_{x^i}\eta_{x^j}.
													\end{equation}
													
													Hence $g_{ij}$ is the outer product of the vector
													$(\eta_{x^1},\ldots,\eta_{x^n})$ with itself. Therefore
													\begin{equation}
													\operatorname{rank}(g_{ij})
													\le 1.
													\end{equation}
													
													Since $n\ge 2$, we have
													\begin{equation}
													\det(g_{ij})=0.
													\end{equation}
													
													Thus $g_{ij}$ is degenerate and cannot be positive definite.
													Consequently, $F$ is not a regular strongly convex Finsler metric.
												\end{proof}




\begin{thebibliography}{00}
													\bibitem{amari2000} S.-I. Amari, H. Nagaoka, Methods of Information Geometry, AMS Monographs, Oxford University Press, 2000.
													\bibitem{shen2006} Z. Shen, Riemann-Finsler geometry with applications to information geometry, Chin. Ann. Math., 27B(1) (2006), 73--94.
													\bibitem{hamel1903} G. Hamel, Über die Geometrien, in denen die Geraden die Kürzesten sind, Math. Ann., 57 (1903), 231--264.
													\bibitem{taha2023} E.H. Taha, B. Tiwari, On Almost Rational Finsler metrics, Bull. Iranian Math. Soc., 49(8) (2023), 1--15.
													\bibitem{cheng2011} X. Cheng, Y. Tian, Locally dually flat Finsler metrics with special curvature properties, Differential Geom. Appl., 29
													(2011), 98--106.
													
													\bibitem{YY10}  Y. Yu and Y. You, On Einstein $m$-th root metrics , Differential Geom. Appl., 28 (2010), 290--294.
													\bibitem{AB11} A. Tayebi and B. Najafi, On $m$-th root metrics with special curvature properties, C. R. Acad. Sci. Paris, Ser. I, 349 (2011), 691--693.
													\bibitem{AAE14} A. Tayebi, A. Nankali and E. Peyghan, Some Properties of $m$-th root Finsler Metrics, J. Contemp. Math. Anal., 49 (2014), 157--166.
													\bibitem{AAE} A. Tayebi, A. Nankali and E. Peyghan, Some curvature properties of Cartan spaces with $m$-th root metrics, Lithuanian Math. J., 54 (2014), 106--114.
													\bibitem{BG} B. Tiwari and G. K. Prajapati, On Einstein Kropina change of $m$-th root Finsler metrics, Differ. Geom. Dyn. Syst., 18 (2016), 139--146.
													\bibitem{BG1} B. Tiwari and G. K. Prajapati, On generalized Kropina change of $m$-th root Finsler metrics, Int. J. Geom. Methods Mod. Phys., 14 (2017) 1750081 (11 pages).
													\bibitem{BGR} B. Tiwari, G. K. Prajapati and R. Gangopadhyay, On Finsler spaces with rational spray coefficients, Differ. Geom. Dyn. Syst., 21 (2019), 180--188.
													\bibitem{SSZ} Chern, S.S., Shen, Z.: Riemannian-Finsler geometry. World Scientific Publisher, Singapore (2005).
													\bibitem{LHZZ} Li, J., He, Y., Zhang, X., Zhang, N.: Dually flat and projectively flat Minkowskian product Finsler metrics, J. Math. Anal. Appl., 524(2) (2023).
												\end{thebibliography}
											\end{document}